\numberwithin{equation}{section}
\newtheorem{theorem}{Theorem}[section]
\newtheorem{proposition}[theorem]{Proposition}
\newtheorem{lemma}[theorem]{Lemma}
\theoremstyle{definition}
\newcommand{\bel}{\begin{equation} \label}
\newcommand{\ee}{\end{equation}}
\newcommand{\re}{{\mathbb R}}
\newcommand{\N}{{\mathbb N}}
\newcommand{\eps}{{\varepsilon}}
\newcommand{\obr}{{\mathcal O}_R}
\def\beq{\begin{equation}}
\def\eeq{\end{equation}}
\newcommand{\bea}{\begin{eqnarray}}
\newcommand{\eea}{\end{eqnarray}}
\newcommand{\beas}{\begin{eqnarray*}}
\newcommand{\eeas}{\end{eqnarray*}}
\begin{document}
\title[Schr\"odinger operators with oscillating decaying potentials]{Discrete spectrum for Schr\"odinger operators with oscillating decaying potentials}

\author[G.~Raikov]{Georgi Raikov}

\begin{abstract}
We consider the Schr\"odinger operator $H_{\eta W} = -\Delta + \eta W$, self-adjoint in $L^2(\re^d)$, $d \geq 1$. Here $\eta$ is a non constant oscillating function, while $W$ decays slowly and regularly at infinity. We study the asymptotic behaviour of the discrete spectrum of $H_{\eta W}$ near the origin, and due to the irregular decay of $\eta W$, we encounter some non semiclassical phenomena. In particular, $H_{\eta W}$ has less eigenvalues than suggested by the semiclassical intuition.

\end{abstract}

\maketitle

{\bf  AMS 2010 Mathematics Subject Classification:} 35P20;   81Q10,
35J10\\

{\bf  Keywords:}
 Schr\"odinger operators, oscillating decaying  potentials, discrete spectrum\\

\section{Introduction}
\label{s1} \setcounter{equation}{0}
In this note, we study the asymptotic behaviour of the negative eigenvalues of the Schr\"odinger operator
    \bel{may60}
    H_V : = -\Delta + V,
    \ee
    self-adjoint in $L^2(\re^d)$, $d \geq 1$. Here, $V : \re^d \to \re$ is a suitable bounded  potential such that $\lim_{|x| \to \infty} V(x) = 0$. We assume that
    \bel{j43}
    V = \eta W,
    \ee
    where the factor $W$ decays regularly at infinity, i.e. its derivatives decay faster than $W$ itself, while the factor $\eta$ is, for example, an appropriate non constant almost periodic function. Thus, the  product $\eta W$ in \eqref{j43} does not decay regularly at infinity.\\
    One-dimensional Schr\"odinger operators defined by \eqref{may60} - \eqref{j43} with periodic factor $\eta$ arose as effective Hamiltonians in \cite{R1}, where the asymptotic distribution of the discrete spectrum for waveguides with perturbed periodic twisting, was investigated. Moreover, multidimensional operators of related kind were discussed in \cite{den, LNS,  S, S3, S1, S2} where the problem about the location of the absolutely continuous spectrum $\sigma_{\rm ac}(H_V)$ of $H_V$ was attacked. A typical result in this direction is that if {\em both} operators $H_V$ and $H_{-V}$ do not have ``too many" negative eigenvalues, e.g. if
    \bel{may65}
    \int_{-\infty}^0 |E|^{-1/2} \left(N(E; V) + N(E; -V)\right) dE < \infty,
    \ee
    where $N(E;V)$ is the number of the eigenvalues of $H_V$, smaller than $E \leq 0$, and counted with the multiplicities, then $\sigma_{\rm ac}(H_V)$ is essentially supported on $[0,\infty)$ (see \cite{DR} for $d=1$ or \cite[Theorem 1.1]{S3} for $d \geq 1$). From this point of view, operators of form \eqref{may60} - \eqref{j43} with vanishing mean value of $\eta$ are of particular interest. \\

    Here we would like to also mention the articles \cite{skr, sas}, concerning {\em rapidly oscillating potentials} $V$; typical examples of such potentials are $V(x) = \cos{(|x|^2)}$ or $V(x) = (1+|x|^2)^{-1} e^{|x|} \sin{(e^{|x|})}$. Among the main problems attacked in \cite{skr} and \cite{sas}, are the self-adjointness and semi-boundedness of $H_V$, criteria for the validity of  $\sigma_{\rm ess}(H_V) = [0,\infty)$ and for the finiteness of the discrete spectrum of $H_V$, as well as various estimates of its negative eigenvalues. Although, formally, the class of the potentials $V$ considered in \cite{skr, sas} does not intersect with the corresponding class studied in the present article, there is a deep and not quite evident relation between the spectral properties of the two classes of Schr\"odinger operators $H_V$ (see e.g. the discussion in \cite[Section XI.8, Appendix 2]{RS3}).\\

    In the present article, we show that
    if the mean value $\eta_0$ of $\eta$ does not vanish, while $W$ decays regularly and admits the estimates\footnote{Here and in the sequel we write $A \asymp B$ if there exist constants $c_j \in (0,\infty)$, $j=1,2$, such that $c_1 A \leq B \leq c_2 A$.}
    \bel{may61}
    W(x) \asymp -|x|^{-\rho}, \quad |x| \geq R,
    \ee
    with $\rho \in (0,2]$, and $R \in (0,\infty)$,  then the effective Hamiltonian which governs the eigenvalue asymptotics for $H_{\eta W}$, is the Schr\"odinger operator $H_{\eta_0 W}$. More precisely, if $\eta_0 > 0$, and $\rho \in (0,2)$, then under appropriate assumptions we have
    $$
    N(E;\eta W) = N(E;\eta_0 W)\,(1 + o(1)) \asymp |E|^{d\left(\frac{1}{2} - \frac{1}{\rho}\right)}, \quad E \uparrow 0,
    $$
    while if $\eta_0 < 0$, then
    \bel{may62}
     N(E;\eta W) = O(1), \quad E \uparrow 0,
     \ee
     i.e. the operator $H_{\eta W}$ has not more than finitely many negative eigenvalues (see Theorem \ref{th4} (i)). Note that if $\eta_0 < 0$, then \eqref{may61} implies that the negative part of the potential $\eta W$ is compactly supported, and, hence,
     $$
     N(E;\eta_0 W) = O(1), \quad E \uparrow 0.
     $$
      Related results are obtained also in the border case $\rho = 2$ (see Theorem \ref{th4} (iii)). \\
      If $\eta_0 = 0$, and $W$ satisfies \eqref{may61} with $\rho \in (1,2]$, we show that \eqref{may62} holds true again, and if $\rho \in (0,1]$, we obtain asymptotic upper bounds of $N(E;\eta W)$ as $E \uparrow 0$ (see Theorem \ref{th4} (ii)). \\
     Under slightly more restrictive assumptions, we obtain the main asymptotic term of $N(E;\eta W)$ as $E \uparrow 0$ in the case $\eta_0 = 0$, and
     $\rho \in (0,1]$ (see Theorem \ref{th5} below). More precisely, we show that the effective Hamiltonian which governs the asymptotics of $N(E;\eta W)$ as $E \uparrow 0$ is the Schr\"odinger operator $H_{-\psi_0 W^2}$ with a suitable constant $\psi_0 > 0$. In particular, it follows from these results that the asymptotic bounds of Theorem \ref{th4} (ii), are sharp. \\

    The article is organized as follows. The next section contains auxiliary known results concerning the asymptotic distribution of the discrete spectrum of the operator $H_V$ with regularly decaying $V$. In Section \ref{s2} we formulate our main results, and briefly comment on it. The proofs of the main results can be found in Section \ref{s4}.

\section{Auxiliary Results}
\label{s1a} \setcounter{equation}{0}
This section contains well known estimates of the discrete spectrum of the Schr\"odinger operator $H_V$ with regularly decaying potential $V$, needed for the better understanding and for the proofs of our main results.
Assume that
    \bel{may66}
    V \in L^\infty(\re^d; \re), \quad \lim_{|x| \to \infty} V(x) = 0.
    \ee
 Then $V$ is relatively compact with respect to $H_0 = -\Delta$, and we have
$$
\sigma_{\rm ess}(H_V) = \sigma_{\rm ess}(H_0) = [0,\infty).
$$
Thus, the possible discrete spectrum of $H_V$ is negative, and it can accumulate only at the origin.

\begin{proposition} \label{mp1}
Let $d \geq 1$. Assume that \eqref{may66} holds true, and there exist constants $c \in (0,\infty)$ and $\rho \in (2,\infty)$ such that
\bel{m10}
V_-(x) \leq c(1+|x|)^{-\rho}, \quad x \in \re^d.
\ee
Then
\bel{m11}
N(0;V) < \infty.
\ee
\end{proposition}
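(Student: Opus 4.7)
The plan is a standard variational reduction followed by the application of dimension-dependent eigenvalue bounds. First, since $V \geq -V_-$ pointwise, the min-max principle yields $H_V \geq H_{-V_-}$ in the quadratic-form sense, so $N(0;V) \leq N(0;-V_-)$. It therefore suffices to establish finiteness of the number of negative eigenvalues of $-\Delta - V_-$, and the decay hypothesis \eqref{m10} is exactly what one needs in order to apply the appropriate classical estimate in each dimension.

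For $d \geq 3$ I would invoke the Cwikel--Lieb--Rozenblum bound
$$
N(0;-V_-) \leq C_d \int_{\re^d} V_-(x)^{d/2}\, dx.
$$
By \eqref{m10} the integrand is dominated by $c^{d/2}(1+|x|)^{-\rho d/2}$, and the integral on $\re^d$ converges because $\rho d/2 > d$ whenever $\rho > 2$. For $d = 1$ I would invoke a Bargmann--Calogero-type estimate of the form
$$
N(0;-V_-) \leq 1 + \int_{\re} |x|\, V_-(x)\, dx,
$$
whose right-hand side is finite for $\rho > 2$ thanks to \eqref{m10}. In both cases \eqref{m11} is immediate.

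The most delicate case, and the main obstacle, is $d = 2$: the CLR bound fails, and the logarithmic singularity of the resolvent $(-\Delta + \lambda)^{-1}$ at $\lambda = 0$ prevents a naive Birman--Schwinger argument at threshold. The cleanest remedy I know is to combine Birman--Schwinger at $\lambda > 0$ with a rank-one splitting that isolates the logarithmic part of the resolvent and hence contributes at most one eigenvalue above $1$ to $V_-^{1/2}(-\Delta + \lambda)^{-1}V_-^{1/2}$, the remainder being uniformly Hilbert--Schmidt in $\lambda$. Equivalently one can appeal to a Birman--Solomyak estimate of the shape $N(0;-V_-) \leq 1 + C \int_{\re^2} V_-(x)(1+|\log|x||)\, dx$, whose right-hand side is finite under \eqref{m10} with $\rho > 2$. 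In each of the three cases the conclusion \eqref{m11} follows.
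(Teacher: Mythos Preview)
Your proposal is correct and follows essentially the same route as the paper: a case split by dimension, invoking the Cwikel--Lieb--Rozenblum bound for $d\geq 3$, a Bargmann-type bound for $d=1$ (the paper cites \cite[Problem 22, Chapter XIII]{RS4}), and a logarithmically weighted integral bound for $d=2$. The only difference is attribution in the two-dimensional case: the paper cites \cite[Eq.~(44)]{ckmw} for a bound of precisely the shape $N(0;-V_-)\leq 1+C\int_{\re^2}V_-(x)(1+|\ln|x||)\,dx$ that you wrote down, whereas you attribute it to Birman--Solomyak and also sketch the underlying rank-one Birman--Schwinger mechanism.
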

If $d \geq 3$, the result follows from the Cwickel-Lieb-Rozenblum estimate
     (see e.g. \cite[Theorem XIII.12]{RS4}). If $d=2$, it is implied by \cite[Eq. (44)]{ckmw}, while for $d=1$ it follows from \cite[Problem 22, Chapter XIII]{RS4}.\\

{\em Remark}: For $E \geq 0$ set
    \bel{may67}
    N_{\rm cl}(E; V) : =
    (2\pi)^{-d} \left|\left\{(x,\xi) \in T^*\re^d \, | \, |\xi|^2 + V(x) < E\right\}\right| =
    \frac{\tau_d}{(2\pi)^d} \int_{\re^d} (V(x)-E)_-^{d/2} dx,
    \ee
    where $|\cdot|$ is the Lebesgue measure, and $\tau_d = \frac{\pi^{d/2}}{\Gamma(1+d/2)}$ is the volume of the unit ball in $\re^d$, $d \geq 1$. Note that if $V$ satisfies \eqref{m10} with $\rho > 2$, then $N_{\rm cl}(0; V) < \infty$.\\

The following proposition shows that the condition $\rho > 2$ in \eqref{m10} is close to the optimal one.
\begin{proposition} \label{mp2} {\rm (\cite[Theorem XIII.82]{RS4})}
Let $d \geq 1$, $\rho \in (0,2)$. Assume that there exist constants $C \in (0,\infty)$ and $R \in (0,\infty)$, such that
\bel{m12}
|V(x)| \leq C(1+|x|)^{-\rho}, \quad x \in \re^d,
\ee
\bel{m13}
|\nabla V(x)| \leq C(1+|x|)^{-\rho-1}, \quad x \in \re^d,
\ee
$$
V(x) \leq - C|x|^{-\rho}, \quad x \in \re^d, \quad |x| \geq R.
$$
Then we have
\bel{j15}
N(E;V) = N_{\rm cl}(E; V)
 \, (1 + o(1)) \asymp
|E|^{d(\frac{1}{2} - \frac{1}{\rho})}, \quad E \uparrow 0.
\ee
\end{proposition}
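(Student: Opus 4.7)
The plan is to convert the problem into a genuine semiclassical eigenvalue count by a dilation, and then apply a Dirichlet--Neumann bracketing argument to the rescaled operator. The natural length scale is $L = L(E) := |E|^{-1/\rho}$, at which $V(x)$ becomes comparable to $E$. Conjugating $H_V - E$ by the unitary dilation $f(x) \mapsto L^{-d/2} f(x/L)$ and dividing the resulting operator by $|E|^{1 - 2/\rho}$, one sees that $N(E;V)$ equals the number of eigenvalues below $-1$ of
\begin{equation*}
H(\hbar) := -\hbar^2 \Delta_y + W_E(y), \qquad \hbar := |E|^{1/\rho - 1/2},\qquad W_E(y) := V(Ly)/|E|.
\end{equation*}
Since $\rho < 2$ we have $\hbar \to 0$ as $E \uparrow 0$, so this is a bona fide semiclassical limit. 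By \eqref{m12} and the lower bound on $V$, on every fixed annulus $\{\delta \leq |y| \leq \delta^{-1}\}$ the family $\{W_E\}$ is uniformly bounded and uniformly comparable to $-|y|^{-\rho}$, while the ball $\{|y| < R/L\}$, on which $W_E$ may be large, has vanishing volume as $E \uparrow 0$. The same change of variables applied to \eqref{may67} shows
\begin{equation*}
N_{\rm cl}(E;V) \;=\; (2\pi\hbar)^{-d}\tau_d \int_{\re^d} (W_E(y)+1)_-^{d/2}\,dy,
\end{equation*}
which is of order $\hbar^{-d} = |E|^{d(1/2 - 1/\rho)}$ and already yields the $\asymp$-part of \eqref{j15}.

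To obtain the sharp asymptotic $N(E;V) = N_{\rm cl}(E;V)(1+o(1))$ I would perform Dirichlet--Neumann bracketing in the $y$-variable. Fix $0 < \delta < 1 < M$, partition the annulus $\{\delta < |y| < M\}$ into cubes of side $\ell = \ell(\hbar)$ with $\hbar \ll \ell \ll 1$, and replace $W_E$ on each cube $Q_k$ by $\sup_{Q_k} W_E$ (respectively $\inf_{Q_k} W_E$), imposing Dirichlet (respectively Neumann) boundary conditions on $\partial Q_k$. The gradient hypothesis \eqref{m13}, read in the $y$-variable as $|\nabla_y W_E(y)| \leq C|y|^{-\rho-1}$, ensures that the oscillation of $W_E$ on each cube is $O(\ell\,\delta^{-\rho-1})$. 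The explicit Weyl law for the constant-potential Dirichlet or Neumann Laplacian on a cube then approximates the local eigenvalue count by the corresponding classical phase-space integral once $\hbar/\ell \to 0$. Summing over cubes and passing successively to the limits $\hbar \to 0$, $\ell \to 0$, $\delta \to 0$, $M \to \infty$, reproduces $N_{\rm cl}(E;V)(1+o(1))$.

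The main obstacle is controlling the three omitted regions. Near the origin ($|y| < \delta$) and near infinity ($|y| > M$), I would invoke the Cwikel--Lieb--Rozenblum inequality (or, for $d = 1, 2$, the estimates cited in Proposition \ref{mp1}) together with \eqref{m12} to bound the quantum counts there by the corresponding classical counts, which are $o(\hbar^{-d})$ as $\delta \to 0$ and $M \to \infty$; here one uses $\rho < 2$ to ensure the integrability of $|y|^{-\rho d/2}$ near the origin. The most delicate piece is the classical turning surface $|y| = 1$: the bracketing shift of $O(\ell\,\delta^{-\rho-1})$ in the effective threshold destroys the comparison inside a shell of volume $O(\ell)$, contributing an error $O(\ell\,\hbar^{-d}) = o(\hbar^{-d})$ that is absorbed into the $(1+o(1))$ remainder.
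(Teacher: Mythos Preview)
The paper does not supply a proof of this proposition: it is stated in Section~2 as a known auxiliary result and attributed directly to \cite[Theorem~XIII.82]{RS4}. So there is no ``paper's own proof'' to compare against; the relevant benchmark is the Reed--Simon argument.

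Your proposal is, in fact, essentially that argument. The dilation $x \mapsto Ly$ with $L = |E|^{-1/\rho}$, reducing the problem to a semiclassical count for $-\hbar^2\Delta + W_E$ with $\hbar = |E|^{1/\rho-1/2}$, followed by Dirichlet--Neumann bracketing into boxes on an annulus and separate control of the inner core, the exterior region, and the turning shell, is exactly the skeleton of the proof in \cite[Theorem~XIII.82]{RS4}. Your identification of the role of \eqref{m13} (uniform control of the oscillation of $W_E$ on boxes) and of the condition $\rho<2$ (integrability of $|y|^{-\rho d/2}$ near $y=0$ and $\hbar \to 0$) is correct. One small slip: the normalization factor you divide by should be $|E|$, not $|E|^{1-2/\rho}$; your stated formulas for $\hbar$ and $W_E$ are nevertheless the right ones.

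The only place your sketch is genuinely thin is the low-dimensional endgame. In $d=1,2$ the Cwikel--Lieb--Rozenblum bound is unavailable, and the finiteness statement of Proposition~\ref{mp1} is not by itself the quantitative semiclassical estimate you need to show that the Neumann contributions from $\{|y|<\delta\}$ and $\{|y|>M\}$ are $o(\hbar^{-d})$. In a full write-up you would substitute, e.g., a Lieb--Thirring moment inequality (and a Tauberian step) or the one-dimensional Bargmann/Calogero bound; Reed--Simon handle this via such replacements. With that caveat, your outline is sound and matches the standard reference the paper invokes.
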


Next, we discuss the border-line case $\rho = 2$. Let $d \geq 1$.  Assume that there exists a function $L : {\mathbb S}^{d-1} \to \re$, bounded and measurable if $d \geq 2$, such that
    \bel{apr1}
    \lim_{r \to \infty} r^2 V(r\omega) = L(\omega),
    \ee
    uniformly with respect to $\omega \in {\mathbb S}^{d-1}$. If $d =1$, set
    $$
    \lambda_1(L) : = \min{\{L(-1), L(1)\}}, \quad \lambda_2(L) : = \max{\{L(-1), L(1)\}}.
    $$
    If $d \geq 2$, let $\left\{\lambda_j\right\}_{j \in {\mathbb N}}$ be the non decreasing sequence of the  eigenvalues of $-\Delta_{{\mathbb S}^{d-1}} + L$, where $-\Delta_{{\mathbb S}^{d-1}}$ is the Beltrami-Laplace operator, self-adjoint in $L^2({\mathbb S}^{d-1})$. Note that $\lambda_1(L)$ is a simple eigenvalue, and $\lambda_j(L) \to \infty$ as $j \to \infty$. For $d \geq 1$ set
    \bel{j10}
{\mathcal C}_d (L) : =
\frac{1}{2\pi} \sum_{j} \left(\lambda_j(L)+\frac{(d-2)^2}{4}\right)_-^{1/2}.
    \ee
\begin{proposition} \label{mp3}
 {\rm (\cite{KS1, hm})} Let $d \geq 1$. Assume that $V \in L^{\infty}(\re^d)$, and there exists a function $L : {\mathbb S}^{d-1} \to \re$, bounded and measurable if $d \geq 2$, such that \eqref{apr1} holds true.
Then we have
	\bel{j17}
\lim_{E \uparrow 0} (|\ln{|E||})^{-1} N(E;V) = {\mathcal C}_d (L).
	\ee
If, moreover, $\lambda_1(L) > - \frac{(d-2)^2}{4}$, $d \geq 1$, then \eqref{m11} is valid again.
\end{proposition}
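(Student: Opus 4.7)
My plan is to reduce the problem by spherical separation of variables to a family of one-dimensional half-line Schr\"odinger operators with inverse-square potentials at infinity, and then to invoke the classical eigenvalue counting asymptotics for such operators. Since $V(x) = L(x/|x|)/|x|^2 + o(|x|^{-2})$ as $|x|\to\infty$, I would first perform a Dirichlet--Neumann bracketing on the sphere $\{|x|=R\}$ for large $R$. The interior piece on $\{|x|<R\}$ has compact resolvent, contributes only $O(1)$ eigenvalues below $0$, and hence does not affect the $|\ln{|E|}|$ asymptotics. On the exterior, for every $\varepsilon>0$ one can sandwich $V$ between the separable potentials $(L(\omega)\pm\varepsilon)/|x|^{2}$ provided $R$ is sufficiently large, and the min--max principle squeezes $N(E;V)$ between the corresponding separable counts.

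For the separable exterior problems I would expand in the orthonormal basis $\{Y_j\}\subset L^2(\mathbb{S}^{d-1})$ of eigenfunctions of $-\Delta_{\mathbb{S}^{d-1}}+L$ with eigenvalues $\lambda_j(L)$, and apply the unitary substitution $\psi(r\omega)=r^{-(d-1)/2} u(r) Y_j(\omega)$ (the $d=1$ case is analogous with two half-lines). This decouples the exterior operator into a direct sum of one-dimensional operators
\[
h_j^{(\pm\varepsilon)} = -\frac{d^2}{dr^2} + \frac{c_j(\pm\varepsilon)}{r^2}, \qquad c_j(\pm\varepsilon) = \lambda_j(L)\pm\varepsilon + \frac{(d-1)(d-3)}{4},
\]
on $L^2((R,\infty))$, with Dirichlet or Neumann condition at $r=R$. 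Using the identity $(d-1)(d-3)/4=(d-2)^2/4-1/4$ and the classical Kirsch--Simon asymptotics for half-line operators with critical $1/r^2$ tail, one has
\[
\lim_{E\uparrow 0}\frac{N(E;h_j^{(\pm\varepsilon)})}{|\ln{|E|}|}=\frac{1}{2\pi}\Bigl(\lambda_j(L)\pm\varepsilon+\tfrac{(d-2)^2}{4}\Bigr)_-^{1/2},
\]
while $N(E;h_j^{(\pm\varepsilon)})=O(1)$ when this quantity vanishes.

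Summing over $j$ (only finitely many terms are nonzero since $\lambda_j(L)\to\infty$) and letting $\varepsilon\downarrow 0$ yields the limit $\mathcal{C}_d(L)$. The finiteness claim follows in the same fashion: if $\lambda_1(L)>-(d-2)^2/4$, then $c_j(0)>-1/4$ for every $j$, each $h_j^{(0)}$ has only finitely many negative eigenvalues, and the bracketing gives $N(0;V)<\infty$. The main obstacle is making this decoupling rigorous: one must verify that the $o(|x|^{-2})$ deviation of $V$ from the separable model does not contribute to the leading $|\ln{|E|}|$ term, and that the Dirichlet--Neumann artefacts at $r=R$ together with the truncation of the spherical expansion are harmless in the $\varepsilon\downarrow 0$ limit. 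This is delicate precisely because the regime is borderline: the accumulation of eigenvalues at zero is driven entirely by the long-range $1/r^2$ tail, so subleading corrections to $V$ must be controlled via resolvent estimates of Birman--Schwinger type.
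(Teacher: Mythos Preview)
The paper does not supply its own proof of this proposition: it is quoted from \cite{KS1, hm} as a known auxiliary result, and no argument is given in the text. Your sketch is essentially the Kirsch--Simon strategy from \cite{KS1} (extended in \cite{hm}), namely Dirichlet--Neumann bracketing at $|x|=R$, replacement of $V$ on the exterior by the separable comparison potentials $(L(\omega)\pm\varepsilon)/|x|^{2}$, angular decomposition in eigenfunctions of $-\Delta_{\mathbb{S}^{d-1}}+L$, and the half-line inverse-square eigenvalue asymptotics. The computation of the radial coupling constants and the identification of $\mathcal{C}_d(L)$ are correct, and the final $\varepsilon\downarrow 0$ argument together with the finiteness claim when $\lambda_1(L)>-(d-2)^2/4$ is the standard one.

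One point worth tightening: you say the $o(|x|^{-2})$ remainder must be controlled ``via resolvent estimates of Birman--Schwinger type''. In fact the min--max comparison you already invoke handles this directly: once $R$ is chosen so large that $|r^2 V(r\omega)-L(\omega)|<\varepsilon$ for $r>R$, the pointwise sandwich $(L-\varepsilon)/r^2 \le V \le (L+\varepsilon)/r^2$ on $\mathcal{O}_R$ gives the two-sided bound on the exterior counting functions without any further resolvent machinery. The genuinely delicate issue is rather the continuity of $\varepsilon \mapsto \sum_j (\lambda_j(L)+\varepsilon+(d-2)^2/4)_-^{1/2}$ at $\varepsilon=0$, which you should state explicitly; it follows because only finitely many terms are nonzero and each is continuous.
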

{\em Remark}: If $L$ is a constant and $d \geq 2$, then $\lambda_j(L) = \lambda_j(0) + L$, $j \in {\mathbb N}$, where $\lambda_j(0)$  are the well-known eigenvalues of the Beltrami-Laplace operator (see, e.g. \cite[Subsections 22.3-4]{shu}); in particular, $\lambda_1(0) = 0$.\\

\section{Main Results}
\label{s2} \setcounter{equation}{0}
Let us first introduce several definitions needed for the statements of the main results.\\
We will write $W \in {\mathcal S}_{m,\varrho}(\re^d)$, $m \in {\mathbb Z}_+$, $\rho \in (0,\infty)$, if $W \in C^m(\re^d;\re)$, and there exists a constant $C \in (0,\infty)$ such that
\bel{m0}
|D^\alpha W(x)| \leq C (1+|x|)^{-\rho - |\alpha|}, \quad x \in \re^d,
\ee
for each $\alpha \in {\mathbb Z}_+^d$ with $0 \leq |\alpha| \leq m$. \\
%If $W  \in {\mathcal S}_{m,\varrho}(\re^d)$ is real valued, we will write, as usual,
%$W  \in {\mathcal S}_{m,\varrho}(\re^d; \re)$.\\
Further, we define a class of admissible functions $\eta : \re^d \to \re$ which is quite similar to the one introduced in \cite[Subsection 2.1]{R3} (see also \cite[Subsection 2.2]{R2}).
  Assume that  $\eta = \eta_0
+ \tilde{\eta}$ where $\eta_0 \in \re$ is a constant, while the function
$0 \neq \tilde{\eta} : \re^d \to \re$ is such that the Poisson equation
     \bel{may1}
\Delta \varphi = \tilde{\eta}
    \ee
admits a  solution $\varphi \in C_{\rm b}^2(\re^d; \re)$, i.e. a solution
     $\varphi : \re^d \to \re$, continuous and bounded together with its derivatives of order up
to two. Note that if there exists a bounded solution  $\varphi \in C^2(\re^d)$ of \eqref{may1}, then due to the Liouville theorem (see e.g. \cite[Section 2.2, Theorem 8]{E}), this solution is unique up to an additive constant. Although this is not crucial for our analysis, we fix this arbitrary constant by choosing $\varphi$ so that
    \bel{may1a}
\limsup_{R \to \infty} \frac{\int_{\left(-\frac{R}{2},\frac{R}{2}\right)^d} \varphi(x) dx}{R^d} = 0.
    \ee
    Then we will say that $\eta$ is an {\em admissible} function, and will write $\eta \in {\rm Adm}(\re^d)$.
 Since
$$
\eta_0 = \lim_{R \to \infty} \frac{\int_{\left(-\frac{R}{2},\frac{R}{2}\right)^d} \eta(x) dx}{R^d},
$$
 we will call the
constant $\eta_0$ {\em the mean value} of $\eta$.
  Also, we will call $\tilde{\eta}$ {\em
the background} of $\eta$.
Next, we describe our leading example of admissible backgrounds $\tilde{\eta}$.
Let $\xi_n \subset \re^d\setminus\{0\}$, $n \in \N$, $\xi_{n_1} \neq \xi_{n_2}$ if $n_1 \neq n_2$, and $\eta_n \in {\mathbb
C}$, $n \in {\mathbb N}$. Assume that
    \bel{may1b}
0 < \sum_{n \in {\mathbb N}}
|\eta_n| (1 + |\xi_n|^{-2})< \infty.
    \ee
 Then the almost periodic
function
    \bel{may9}
\tilde{\eta}(x) : = \sum_{n \in {\mathbb N}} \eta_n
e^{i\xi_n . x}, \quad x \in \re^d,
    \ee
is an admissible background,
provided that it is real-valued. In this case
    \bel{may10}
    \varphi(x) : = -\sum_{n \in {\mathbb N}} \eta_n |\xi_n|^{-2}
e^{i\xi_n . x}, \quad x \in \re^d,
    \ee
    is the solution $\varphi \in C_{\rm b}^2(\re^d;\re)$ of \eqref{may1}, which satisfies also \eqref{may1a}.\\
    If $\eta = \eta_0 + \tilde{\eta}$ with $\eta_0 \in \re$, and real-valued  $\tilde{\eta}$ of form \eqref{may9} with $\eta_n$ and $\xi_n$ satisfying  \eqref{may1b}, then we will write $\eta \in {\mathcal A}(\re^d)$.\\
    For example, if $\eta : \re^d \to \re$ is a non identically constant function, periodic with respect to a non degenerate lattice in $\re^d$, with absolutely convergent series of Fourier coefficients, then $\eta \in {\mathcal A}(\re^d)$. \\
    {\em Remark}: Evidently, the class ${\rm Adm}(\re^d)$ is essentially larger than ${\mathcal A}(\re^d)$. For example, if $0 \neq \tilde{\eta} \in C_0^\infty(\re^d; \re)$ with $d \geq 3$, then $\tilde{\eta} \in {\rm Adm}(\re^d)$ since
    $$
    \varphi(x) : = - \frac{1}{d(d-2)\tau_d} \int_{\re^d} |x-y|^{2-d} \tilde{\eta}(y)\, dy, \quad x \in \re^d,
    $$
    provides a solution of \eqref{may1} which is in $C_{\rm b}^2(\re^d;\re)$ (see e.g. \cite[Section 2.2, Theorem 1]{E}), and satisfies \eqref{may1a} as it decays at infinity. On the other hand, obviously, $\tilde{\eta} \not \in {\mathcal A}(\re^d)$.
\begin{theorem} \label{th4}
Let $d \geq 1$, $\rho \in (0,2]$,  and $W \in {\mathcal S}_{2,\rho}(\re^d)$. Suppose that $\eta = \eta_0 + \tilde{\eta} \in {\rm Adm}(\re^d)$. \\
{\rm (i)} Let $\rho \in (0,2)$. Assume that  there exist constants $C \in (0,\infty)$, and $R \in (0,\infty)$, such that
    \bel{j41}
W(x) \leq - C|x|^{-\rho}, \quad x \in \re^d, \quad |x| \geq R.
    \ee
If $\eta_0 > 0$, then
	\bel{j14}
N(E;\eta W) =
N_{\rm cl}(E; \eta_0 W) \, (1 + o(1)) \asymp
|E|^{d(\frac{1}{2} - \frac{1}{\rho})}, \quad E \uparrow 0,
	\ee
the function $N_{\rm cl}$ being defined in \eqref{may67}.
If, on the contrary, $\eta_0 < 0$, then
\bel{1}
N(0;\eta W) < \infty.
\ee
{\rm (ii)}  Assume $\eta_0 = 0$.
Then we have
    \bel{j21}
N(E;\eta W) = \left\{
\begin{array} {l}
O\left(|E|^{\frac{d}{2}(1 - \frac{1}{\rho})}\right) \quad {\rm if} \quad \rho \in (0,1),\\
O\left(|\ln{|E|}|\right) \quad {\rm if} \quad \rho = 1,\\
O\left(1\right) \quad {\rm if} \quad \rho \in (1,2],\\
\end{array}
    \right.
 \quad E \uparrow 0.
    \ee
{\rm  (iii)} Let $\rho = 2$. Suppose that there exists a function $L : {\mathbb S}^{d-1} \to \re$, bounded and measurable if $d \geq 2$, such that
$$
\lim_{r \to\infty} r^2 W(r \omega) = L(\omega),
$$
uniformly with respect to $\omega \in {\mathbb S}^{d-1}$.
Then we have
	\bel{j19}
\lim_{E \uparrow 0} (|\ln{|E||})^{-1} N(E;\eta W) = {\mathcal C}_d (\eta_0 L),
	\ee
${\mathcal C}_d$ being defined in \eqref{j10}.
If, moreover, $\lambda_1(\eta_0  L) > - \frac{(d-2)^2}{4}$, then \eqref{1} holds true.
\end{theorem}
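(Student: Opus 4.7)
My plan is to exploit the decomposition $\eta W = \eta_0 W + (\Delta\varphi)W$, where $\varphi \in C_{\rm b}^2(\re^d;\re)$ is the bounded potential of $\tilde\eta$, in order to shift the oscillation into derivatives by a single integration by parts: for $\psi \in H^1(\re^d)$ I will write
\begin{equation*}
\int_{\re^d} (\Delta\varphi) W |\psi|^2 \, dx = -\int (\nabla\varphi \cdot \nabla W) |\psi|^2 \, dx - 2\,\mathrm{Re}\!\int W \nabla\varphi \cdot \bar{\psi} \nabla\psi \, dx,
\end{equation*}
and handle the cross term by Young's inequality: for any $\epsilon \in (0,1)$,
\begin{equation*}
\Bigl|\,2\,\mathrm{Re}\!\int W\nabla\varphi \cdot \bar{\psi}\nabla\psi\,\Bigr| \leq \epsilon \|\nabla\psi\|^2 + \epsilon^{-1}\!\int W^2 |\nabla\varphi|^2 |\psi|^2 \, dx.
\end{equation*}
These combine to yield the two-sided form bound $(1-\epsilon)(-\Delta) + V_\epsilon^- \leq H_{\eta W} \leq (1+\epsilon)(-\Delta) + V_\epsilon^+$ with $V^{\pm}_\epsilon := \eta_0 W \pm \bigl(|\nabla\varphi \cdot \nabla W| + \epsilon^{-1} W^2 |\nabla\varphi|^2\bigr)$. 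Invoking min-max and the scaling $N(E;\lambda(-\Delta)+V) = N(E/\lambda;\lambda^{-1}V)$ for $\lambda > 0$, I will then sandwich
\begin{equation*}
N\!\Bigl(\tfrac{E}{1+\epsilon};\tfrac{V_\epsilon^+}{1+\epsilon}\Bigr) \leq N(E;\eta W) \leq N\!\Bigl(\tfrac{E}{1-\epsilon};\tfrac{V_\epsilon^-}{1-\epsilon}\Bigr).
\end{equation*}

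\textbf{Applying the auxiliary results.} Since $W \in \mathcal{S}_{2,\rho}$, the corrections $|\nabla\varphi \cdot \nabla W| = O(|x|^{-\rho-1})$ and $\epsilon^{-1}W^2|\nabla\varphi|^2 = O(\epsilon^{-1}|x|^{-2\rho})$ are of strictly lower order at infinity than $|W| \asymp |x|^{-\rho}$ for $\rho \in (0,2]$. For part (i) with $\eta_0 > 0$, I will check that $V_\epsilon^\pm$ satisfy the hypotheses of Proposition \ref{mp2} with exponent $\rho$ (the bound $V_\epsilon^\pm(x) \leq -\tfrac{1}{2}\eta_0 C|x|^{-\rho}$ holding for $|x|$ large), apply Proposition \ref{mp2}, and choose $\epsilon = \epsilon(E) \downarrow 0$ slowly (e.g.\ $\epsilon = |E|^{1/2}$) so that the $\epsilon^{-1}$-weighted correction stays negligible at the relevant scale $|x| \sim |E|^{-1/\rho}$, delivering \eqref{j14}. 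For $\eta_0 < 0$, the sign $\eta_0 W \geq |\eta_0| C |x|^{-\rho}$ at infinity dominates both corrections, so $(V_\epsilon^-)_-$ is compactly supported and Proposition \ref{mp1} gives \eqref{1}. For part (ii) with $\eta_0 = 0$, only the corrections remain in $V_\epsilon^\pm$ and their slowest decay is $|x|^{-\min(\rho+1,\,2\rho)}$, so I will apply Proposition \ref{mp2} with effective exponent $2\rho < 2$ for $\rho \in (0,1)$ (first line of \eqref{j21}), Proposition \ref{mp3} with constant angular profile for $\rho = 1$ (second line), and Proposition \ref{mp1} with exponent $\rho+1 > 2$ for $\rho \in (1,2]$ (third line). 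For part (iii) with $\rho = 2$, the corrections are $o(|x|^{-2})$, hence $r^2 V_\epsilon^\pm(r\omega) \to \eta_0 L(\omega)$ uniformly on ${\mathbb S}^{d-1}$; Proposition \ref{mp3} then yields the sandwich $\mathcal{C}_d((1+\epsilon)^{-1}\eta_0 L) \leq \liminf \leq \limsup \leq \mathcal{C}_d((1-\epsilon)^{-1}\eta_0 L)$ for $(|\ln|E||)^{-1}N(E;\eta W)$, and continuity of the eigenvalues of $-\Delta_{{\mathbb S}^{d-1}} + L$ in $L$ closes \eqref{j19}; the finiteness statement in (iii) follows because $\lambda_1(\eta_0 L) > -(d-2)^2/4$ persists for $(1\mp\epsilon)^{-1}\eta_0 L$ with small $\epsilon$.

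\textbf{Main obstacle.} The principal difficulty will be the joint passage $E \uparrow 0$, $\epsilon \downarrow 0$ in part (i) with $\eta_0 > 0$: I must quantify how fast $\epsilon$ may shrink so that $\epsilon^{-1}W^2|\nabla\varphi|^2 = o(|\eta_0 W|)$ in the region $|x| \lesssim |E|^{-1/\rho}$ where $N_{\rm cl}$ concentrates, which forces $\epsilon \gg |E|$, while the kinetic rescaling $(1\mp\epsilon)^{-1}$ contributes only the benign factors $(1\mp\epsilon)^{-d/2} \to 1$. A parallel verification is needed in part (iii) to justify $\mathcal{C}_d((1\mp\epsilon)^{-1}\eta_0 L) \to \mathcal{C}_d(\eta_0 L)$ as $\epsilon \downarrow 0$; this will rest on analytic dependence of the eigenvalues of $-\Delta_{{\mathbb S}^{d-1}} + L$ on $L$ and on the continuity of the map $L \mapsto \sum_j (\lambda_j(L)+(d-2)^2/4)_-^{1/2}$, which is ensured because the summand vanishes continuously as $\lambda_j(L)$ crosses the threshold $-(d-2)^2/4$.
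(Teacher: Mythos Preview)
Your route is workable but differs from the paper's in the key reduction step. Rather than a single integration by parts followed by Young's inequality, the paper \emph{completes the square}: with $\Phi:=\varphi W$ one has $\eta W = \eta_0 W + \Delta\Phi + \tilde V$, $\tilde V := -2\nabla\varphi\cdot\nabla W - \varphi\Delta W$, and hence the exact identity
\[
\int_{\re^d}\bigl(|\nabla u|^2 + \eta W\,|u|^2\bigr)\,dx
= \int_{\re^d}\bigl(|\nabla u - u\,\nabla\Phi|^2 + (\eta_0 W - |\nabla\Phi|^2 + \tilde V)\,|u|^2\bigr)\,dx .
\]
The substitution $u=e^{\Phi}v$ then shows that $H_{\eta W}$ and $-\nabla\!\cdot e^{2\Phi}\nabla + e^{2\Phi}(\eta_0 W - |\nabla\Phi|^2 + \tilde V)$ have the \emph{same} negative-eigenvalue count. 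Since $e^{2\Phi}\to1$ at infinity, Dirichlet--Neumann bracketing on $\{|x|>R_\epsilon\}$ yields a sandwich with \emph{fixed} $\epsilon$ and no $\epsilon^{-1}$ in the comparison potential; the paper next peels off $(1\pm\epsilon)^{-2}\eta_0 W$ via the subadditivity $\nu(q_1+q_2)\le\nu(q_1)+\nu(q_2)$, applies Propositions~\ref{mp1}--\ref{mp3} separately to this main term and to the $O\bigl((1+|x|)^{-\min(2\rho,\rho+1)}\bigr)$ remainder, and only \emph{afterwards} lets $\epsilon\downarrow0$ using continuity of $N_{\rm cl}$ (resp.\ ${\mathcal C}_d$) in the coupling.

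Your approach is more elementary (no gauge substitution), but the paper's buys two things you should incorporate. First, the coupling $\epsilon=\epsilon(E)\downarrow0$ is both unnecessary and delicate: the $o(1)$ in Proposition~\ref{mp2} is not stated uniformly in the potential, and your $V_\epsilon^\pm$ blows up as $\epsilon\to0$; keeping $\epsilon$ fixed until after $E\uparrow0$ and then sending $\epsilon\to0$ removes every uniformity issue you flag under ``Main obstacle''. Second, you cannot apply Proposition~\ref{mp2} directly to $V_\epsilon^\pm$ in part~(i) when $\rho<1$: its gradient hypothesis $|\nabla V|\le C(1+|x|)^{-\rho-1}$ fails for the piece $W^2|\nabla\varphi|^2$, whose derivative contains $W^2\,\nabla(|\nabla\varphi|^2)$ and hence decays only like $|x|^{-2\rho}$ (since $\varphi\in C^2_{\rm b}$ gives no decay for $\nabla(|\nabla\varphi|^2)$); the absolute value $|\nabla\varphi\cdot\nabla W|$ need not be $C^1$ either. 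The repair is exactly the paper's splitting step: separate $\eta_0 W$, which does satisfy Proposition~\ref{mp2}, from the lower-order corrections, which are handled at exponent $\rho_*=\min(2\rho,\rho+1)$ by Propositions~\ref{mp1}--\ref{mp3}. With these two adjustments your argument goes through.
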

{\em Remark}: If $\eta \in L^{\infty}(\re^d; \re)$, $d \geq 1$, and  $W \in {\mathcal S}_{0,\rho}(\re^d)$ with $\rho \in (2,\infty)$, then Proposition \ref{mp1}  implies that \eqref{1} holds true again. \\

Our next theorem makes the result of Theorem \ref{th4} (ii) more precise under more restrictive assumptions on $\eta$. For its formulation we need an additional notation. Assume $\eta \in {\rm Adm}(\re^d)$, and set
    \bel{may11}
    \psi(x) = |\nabla \varphi(x)|^2, \quad x \in \re^d,
    \ee
    where $\varphi \in C^2_{\rm b}(\re^d;\re)$ is the solution of \eqref{may1} - \eqref{may1a}. Note that if $\psi \in {\rm Adm}(\re^d)$, then its mean value $\psi_0$ is non negative.
\begin{theorem} \label{th5}
Let $d \geq 1$,  Suppose that $\eta \in {\rm Adm}(\re^d)$, and $\eta_0 = 0$. Let $\psi \in {\rm Adm}(\re^d)$, and $\psi_0 > 0$.\\
{\rm (i)} Assume that $\rho \in (0,1)$,  $W \in {\mathcal S}_{2,\rho}(\re^d; \re)$,
and
  there exist constants $C \in (0,\infty)$, and $R \in (0,\infty)$, such that
    \bel{may1c}
W(x)^2 \geq C|x|^{-2\rho}, \quad x \in \re^d, \quad |x| \geq R.
    \ee
Then we have
	\bel{may38}
N(E;\eta W) = N_{\rm cl}(E;-\psi_0 W^2)
 \, (1 + o(1)) \asymp
|E|^{\frac{d}{2}(1 - \frac{1}{\rho})}, \quad E \uparrow 0.
	\ee
{\rm  (ii)} Let $\rho = 1$. Assume that $\frac{\partial \varphi}{\partial x_j} \in {\rm Adm}(\re^d)$, $j=1,\ldots,d$. Suppose that $W \in {\mathcal S}_{3,\rho}(\re^d; \re)$, and there exists a function ${\mathcal L} : {\mathbb S}^{d-1} \to \re$, bounded and measurable if $d \geq 2$, such that
$$
\lim_{r \to\infty} r^2 W^2(r \omega) = {\mathcal L}(\omega),
$$
uniformly with respect to $\omega \in {\mathbb S}^{d-1}$.
Then we have
	\bel{may39}
\lim_{E \uparrow 0} (|\ln{|E||})^{-1} N(E;\eta W) = {\mathcal C}_d (-\psi_0 {\mathcal L}).
	\ee
If, moreover, $\lambda_1(-\psi_0  {\mathcal L}) > - \frac{(d-2)^2}{4}$, then \eqref{1} holds true again.
\end{theorem}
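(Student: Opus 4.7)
The plan is to show that, as $E\uparrow 0$, $N(E;\eta W)$ matches asymptotically $N(E;-\psi_0 W^2)$, and then to invoke Proposition~\ref{mp2} in case~(i) (the effective potential $-\psi_0 W^2$ has decay exponent $2\rho\in(0,2)$, satisfying \eqref{m10}--\eqref{m13}) and Proposition~\ref{mp3} in case~(ii) (with $2\rho = 2$ and $r^2(-\psi_0 W^2)(r\omega)\to-\psi_0\mathcal{L}(\omega)$). The finiteness assertion under $\lambda_1(-\psi_0\mathcal{L})>-(d-2)^2/4$ then is immediate from the last statement of Proposition~\ref{mp3}.

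Since $\eta_0 = 0$ we have $\eta = \Delta\varphi$, and the product rule gives $\eta W = \nabla\cdot(W\nabla\varphi) - \nabla\varphi\cdot\nabla W$ as a pointwise identity. Inserting this into the quadratic form of $H_{\eta W}$, integrating by parts, and completing the square with $A:=W\nabla\varphi$, I would establish
\begin{equation*}
\langle u,H_{\eta W} u\rangle = \|(\nabla - A)u\|^2 - \int_{\re^d}\psi W^2|u|^2\,dx - \int_{\re^d}(\nabla\varphi\cdot\nabla W)|u|^2\,dx
\end{equation*}
on the form domain. Here the first term is a magnetic kinetic form with bounded vector potential $A$ vanishing at infinity, the second is exactly the effective attractive potential $-\psi W^2$, and the third is a short-range remainder of order $|x|^{-\rho-1}$.

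To strip away the magnetic form I would introduce the non-unitary similarity $u = e^{W\varphi}v$, which is bounded with bounded inverse on $L^2(\re^d)$ since $W\varphi\in L^{\infty}(\re^d)$ and which tends to the identity at infinity. A direct computation gives $(\nabla - A)u = e^{W\varphi}(\nabla + \varphi\nabla W)v$; an integration by parts on the cross term $\int e^{2W\varphi}\varphi\nabla W\cdot\nabla|v|^2$ converts the magnetic form into $\int e^{2W\varphi}|\nabla v|^2$ plus a multiplicative error of order $|x|^{-\rho-1}$. Combining with the Birman--Schwinger principle, min-max monotonicity of $N(E;\cdot)$ in the potential, and the fact that $e^{\pm 2W\varphi}\to 1$ at infinity, one concludes $N(E;\eta W) = N(E;-\psi W^2)(1+o(1))$ as $E\uparrow 0$. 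The replacement of $\psi$ by its mean value $\psi_0$ is then achieved by applying the same IBP trick to $\int\tilde\psi W^2|u|^2$ with $\tilde\psi = \Delta\varphi_\psi\in{\rm Adm}(\re^d)$: in case~(i) the resulting error is of order $|x|^{-2\rho-1}$, negligible against $\psi_0 W^2\sim |x|^{-2\rho}$.

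In case~(ii), where $\rho = 1$, the remainder $\int\nabla\varphi\cdot\nabla W\,|u|^2$ is itself of the critical order $|x|^{-2}$, and the hypothesis $\partial\varphi/\partial x_j\in{\rm Adm}(\re^d)$ is essential: writing $\partial_j\varphi = \Delta\Phi_j$ with $\Phi_j\in C^2_{\rm b}(\re^d;\re)$ permits a second integration by parts that pushes this remainder down to order $|x|^{-3}$, harmless for the logarithmic asymptotic \eqref{may39}; the hypothesis $W\in\mathcal{S}_{3,\rho}$ provides the extra derivative needed here. I expect the hardest step throughout to be the exact constant-matching in the magnetic reduction: a crude Cauchy--Schwarz on the cross term in $\|(\nabla - A)u\|^2$ introduces spurious factors $(1\pm\varepsilon)^{\pm 1}$ and $\varepsilon^{-1}$ that would yield only $N(E;\eta W)\asymp N(E;-\psi_0 W^2)$, not asymptotic equality. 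The similarity $u = e^{W\varphi}v$ circumvents this because $A$ is almost a gradient---its ``curl'' $\nabla W\wedge\nabla\varphi$ decays strictly faster than $A$ itself---so every spurious contribution decays faster than $\psi_0 W^2$ and is absorbed into the $(1+o(1))$ factor on the Birman--Schwinger side.
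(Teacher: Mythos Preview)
Your approach is essentially the paper's: the substitution $u=e^{\varphi W}v$ (your ``gauge'' $e^{W\varphi}$ is the paper's $e^{\Phi}$ with $\Phi=\varphi W$) and the resulting effective potential $-\psi W^2$ plus lower-order remainders are exactly the content of Subsections~4.2 and~4.4, and your handling of the critical term $\nabla\varphi\cdot\nabla W$ in part~(ii) via the admissibility of $\partial_j\varphi$ matches \eqref{may42}.

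Two small corrections are worth noting. First, you will not avoid the $(1\pm\eps)$ factors: even after the exact similarity the kinetic term carries the weight $e^{2\Phi}$, and the paper removes it by Dirichlet--Neumann bracketing plus the bound $1-\eps\le e^{2\Phi}\le 1+\eps$ on $\mathcal O_R$, then sends $\eps\to 0$ using continuity of $N_{\rm cl}$ and $\mathcal C_d$. Your ``Birman--Schwinger plus $e^{2\Phi}\to 1$'' is this argument in disguise, not a way around it. Second, the $\psi\to\psi_0$ reduction is cleanest not as a fresh integration by parts but as a direct application of the already-proved Theorem~\ref{th4}(i) (resp.~(iii)) with $-W^2\in\mathcal S_{2,2\rho}$ in the role of $W$ and $\psi\in{\rm Adm}(\re^d)$ with mean $\psi_0>0$ in the role of $\eta$; this is what the paper does, and it saves you from repeating the whole localization machinery. (Also, the ``magnetic'' and ``curl'' language is a slight misnomer: $A=W\nabla\varphi$ is real, and what makes the gauge work is simply that $A-\nabla\Phi=-\varphi\nabla W$ is of lower order, not any curl condition.)
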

{\em Remarks}: (i) If $\eta \in {\mathcal A}(\re^d)$, and $\psi \in {\mathcal A}(\re^d)$, then
$$
\psi_0 = \sum_{n \in \N} \frac{|\eta_n|^2}{|\xi_n|^2} \in (0,\infty).
$$
However, $\eta \in {\mathcal A}(\re^d)$ does not imply automatically $\psi \in {\mathcal A}(\re^d)$. In fact, we have
 $$
 \psi(x) = \sum_{m \in \N} \sum_{n \in \N} \frac{\eta_m \overline{\eta}_n \xi_m \cdot \xi_n}{|\xi_m|^2 |\xi_n|^2} e^{i(\xi_m - \xi_n)\cdot x} =
 \psi_0 + \sum_{\N \ni m \neq n \in \N} \frac{\eta_m \overline{\eta}_n \xi_m \cdot \xi_n}{|\xi_m|^2 |\xi_n|^2} e^{i(\xi_m - \xi_n)\cdot x},
 $$ and the set $\left\{\xi_m - \xi_n\right\}_{\N \ni m \neq n \in \N}$ could contain subsequences which converge arbitrarily fast to zero.\\
 On the other hand, if $\eta : \re^d \to \re$ is a non identically constant function, periodic with respect to a non degenerate lattice in $\re^d$, with absolutely convergent series of Fourier coefficients, then
$\psi \in {\mathcal A}(\re^d)$.\\
(ii) Note that if $\varphi \in C_{\rm b}^1(\re^d)$, then the mean values of the derivatives $\frac{\partial \varphi}{\partial x_j}$, $j=1,\ldots,d$, vanish. \\
(iii) It is easy to check that under the general assumptions of Theorem \ref{th5}, estimate \eqref{may65} holds if and only if $\rho > \frac{d}{d+1}$ which is coherent with the results of \cite{LNS, S1} and \cite[Theorem 1.2]{S3}. \\

The proof of Theorems \ref{th4} and \ref{th5} can be found in Section \ref{s4}.
 Note that most of the results of these theorems are not of semiclassical nature. For example, estimate \eqref{1} could hold true even if $N_{\rm cl}(0;\eta W) = \infty$.   This could happen if, for example, $W(x) = - (1+|x|^2)^{-\rho/2}$, $x \in \re^d$, $\rho \in (0,2)$, and  $\eta \in C^\infty({\mathbb T}^d; \re)$ with ${\mathbb T}^d = \re^d/{\mathbb Z}^d$, such that the mean value of $\eta$ is negative, but its positive part does not vanish identically. Similarly, relation \eqref{j14} is not semiclassical  in the sense of \eqref{j15} since it is not difficult to construct examples of $W$ and $\eta$ with $\eta_0 > 0$ which satisfy the assumptions of part (i) of Theorem \ref{th4}, such that
 $$
 \limsup_{E \uparrow 0}\frac{N_{\rm cl}(E;\eta_0 W)}{N_{\rm cl}(E;\eta W)} < 1.
 $$
 A related effect where the main terms of the eigenvalue asymptotics for quantum magnetic Hamiltonians depend only on the mean value of the oscillating {\em magnetic field}, can be found in \cite{R2, R3}. \\
 The non semiclassical nature of Theorem \ref{th5} is even more conspicuous, since, for instance, already the order $|E|^{\frac{d}{2}\left(1 - \frac{1}{\rho}\right)}$ of asymptotic relation  \eqref{may38} is different from the semiclassical one $|E|^{d\left(\frac{1}{2} - \frac{1}{\rho}\right)}$, appearing in \eqref{j15}.

\section{Proofs of the Main Results}
\label{s4} \setcounter{equation}{0}
{\bf 4.1.} In this subsection we introduce notations and establish auxiliary facts, necessary for the proofs of Theorems \ref{th4} and \ref{th5}. \\
Let $q$ be a a lower-bounded closed quadratic form with domain ${\rm Dom}(q)$, dense in the separable Hilbert space ${\mathcal H}$. Let $Q$ be the self-adjoint operator generated by $q$ in ${\mathcal H}$. Assume that $\sigma_{\rm ess}(Q) \subset [0,\infty)$. Let
    $\nu(q)$ denote the number of the negative eigenvalues of the operator $Q$, counted with the multiplicities.  By the well known Glazman lemma, we have
    $$
    \nu(q) = \sup \, {\rm dim}\,{\mathcal F},
    $$
    where ${\mathcal F}$ are the subspaces of ${\rm Dom}(q)$ whose non zero elements $u$ satisfy $q[u] < 0$.

If $q_j$, $j=1,2$,  are two lower-bounded closed quadratic forms, densely defined in the same Hilbert space ${\mathcal H}$, and ${\rm Dom}(q_1) \cap {\rm Dom}(q_2)$ is dense in ${\mathcal H}$, then
    \bel{may125}
    \nu(q_1+q_2) \leq \nu(q_1) + \nu(q_2)
    \ee
    (see e.g. \cite[Eq. (125)]{RS4}).  \\
    Further, let $\Omega \subset \re^d$, $d \geq 1$, be a domain, i.e. an open, connected, non empty set. If $d \geq 2$, and $\partial \Omega \neq \emptyset$, we assume that the boundary $\partial \Omega$ is Lipschitz, and will say that $\Omega$ is admissible. Let $E \in (-\infty,0]$, let $g : \Omega \to [c_1,c_2]$ with $0 < c_1 \leq c_2 < \infty$, be a Lebesgue-measurable function, and let $V : \Omega \to \re$ be a bounded, Lebesgue-measurable function, such that $\lim_{|x| \to \infty, \; x \in \Omega} V(x) = 0$ in case that $\Omega$ is unbounded. Introduce the quadratic forms
    $$
    q_{E,j}[u; \Omega, g, V] : = \int_\Omega \left(g\left(|\nabla u|^2 - E |u|^2\right) + V |u|^2\right)\, dx, \quad j = {\mathcal D}, {\mathcal N},
    $$
    with domains
    $$
    {\rm Dom}(q_{E,{\mathcal D}}) = {\rm H}_0^1(\Omega), \quad {\rm Dom}(q_{E,{\mathcal N}}) = {\rm H}^1(\Omega),
    $$
    where, as usual, ${\rm H}^1(\Omega)$ is the first-order Sobolev space with norm defined by
    $$
    \|u\|^2_{{\rm H}^1(\Omega)} = \int_{\Omega} \left(|\nabla u|^2 + |u|^2\right) dx,
    $$
    while ${\rm H}_0^1(\Omega)$ is the completion of $C_0^\infty(\Omega)$ in ${\rm H}^1(\Omega)$.
    If $\Omega = \re^d$, then ${\rm H}_0^1(\re^d) = {\rm H}^1(\re^d)$, and we write $q_{E}[u; \re^d, g, V]$ instead of $q_{E,j}[u; \re^d, g, V]$, $j = {\mathcal D}, {\mathcal N}$.
    If we want to indicate the dependence of $q_{E,j}$ only on the parameters $\Omega, g, V$ but not on its variable $u \in  {\rm Dom}(q_{E,j})$, we write
    $q_{E,j}(\Omega, g, V)$ instead of $q_{E,j}[\cdot; \Omega, g, V]$. \\
    If $\Omega$ is a bounded admissible domain, then the compactness of the embedding of ${\rm H}^1(\Omega)$ into $L^2(\Omega)$ easily implies
    \bel{may45}
    \nu(q_{E,j}(\Omega,g,V)) = O(1), \quad E \uparrow 0, \quad j = {\mathcal D}, {\mathcal N}.
    \ee
    Similarly, if ${\rm supp}\,V_-$ is contained in a bounded admissible domain $\tilde{\Omega} \subset \Omega$, then \eqref{may45} holds true for any admissible $\Omega$. \\
    Note that we have
    \bel{may30}
    N(E;V) = \nu(q_E(\re^d; 1, V)), \quad E \leq 0.
    \ee
    For $R \in (0,\infty)$ set
    $$
    B_R : = \left\{x \in \re^d \, | \, |x| < R\right\}, \quad {\mathcal O}_R : = \left\{x \in \re^d \, | \, |x| > R\right\}.
    $$
    \begin{lemma} \label{l31}
    Let $R \in (0,\infty)$. Then for each $\varepsilon \in (0,1)$ we have
    \bel{may19}
    \nu\left(q_{E,{\mathcal N}}(\obr, 1, V)\right) \leq \nu\left(q_{E,{\mathcal D}}(\obr, 1, (1-\varepsilon)^{-1}V)\right) + O(1), \quad E \uparrow 0.
    \ee
    \end{lemma}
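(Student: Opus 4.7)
The plan is to apply the IMS localisation formula with two smooth cutoffs that partition unity between a neighbourhood of $\partial\obr$ and its complement, to dominate the resulting boundary contribution by \eqref{may45}, and then to absorb the IMS gradient error by scaling $V$ by $(1-\varepsilon)^{-1}$ and invoking \eqref{may125} a second time.

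First I would fix $\chi_1,\chi_2\in C^\infty(\re^d;[0,1])$ with $\chi_1^2+\chi_2^2\equiv 1$, $\chi_1\equiv 0$ on $\{|x|\leq R+1\}$, and $\chi_1\equiv 1$ on $\{|x|\geq R+2\}$, so that $W_0:=|\nabla\chi_1|^2+|\nabla\chi_2|^2$ is bounded and compactly supported in the annulus $\{R+1\leq|x|\leq R+2\}$. A direct expansion of $|\nabla(\chi_j u)|^2$, combined with $\chi_1^2+\chi_2^2\equiv 1$, yields the identity
$$
q_{E,{\mathcal N}}[u;\obr,1,V]=A[u]+B[u],\qquad u\in H^1(\obr),
$$
with $A[u]:=\int_{\obr}(|\nabla(\chi_1 u)|^2+(V-W_0-E)|\chi_1 u|^2)\,dx$ and $B[u]$ the analogous expression with $\chi_1$ replaced by $\chi_2$. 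By \eqref{may125}, $\nu(q_{E,{\mathcal N}}(\obr,1,V))\leq \nu(A)+\nu(B)$. Since $\chi_2 u$ is supported in the bounded admissible annulus $\tilde A:=\obr\cap\{|x|<R+2\}$ and the map $u\mapsto\chi_2 u$ is injective on any subspace where $B<0$, estimate \eqref{may45} gives $\nu(B)\leq \nu(q_{E,{\mathcal N}}(\tilde A,1,V-W_0))=O(1)$ as $E\uparrow 0$. Similarly, $\chi_1 u\in H^1_0(\obr)$ because $\chi_1$ vanishes near $\partial\obr$, and injectivity again yields $\nu(A)\leq \nu(\tilde q)$, where $\tilde q[v]:=\int_{\obr}(|\nabla v|^2+(V-W_0-E)|v|^2)\,dx$ on $H^1_0(\obr)$.

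The key step is then the algebraic identity
$$
\tilde q[v]=(1-\varepsilon)\,q_{E/(1-\varepsilon),{\mathcal D}}[v;\obr,1,(1-\varepsilon)^{-1}V]+\varepsilon\!\int_{\obr}\!|\nabla v|^2\,dx-\!\int_{\obr}\!W_0|v|^2\,dx,
$$
which follows from $(1-\varepsilon)\cdot(1-\varepsilon)^{-1}V=V$ and $(1-\varepsilon)\cdot E/(1-\varepsilon)=E$. Applying \eqref{may125} once more, and using that the positive factor $(1-\varepsilon)$ does not affect negative eigenvalue counts,
$$
\nu(\tilde q)\leq \nu\bigl(q_{E/(1-\varepsilon),{\mathcal D}}(\obr,1,(1-\varepsilon)^{-1}V)\bigr)+\nu\!\Bigl(\varepsilon\!\int|\nabla v|^2\,dx-\!\int W_0|v|^2\,dx\Bigr).
$$
Extending trial functions by zero from $H^1_0(\obr)$ to $H^1(\re^d)$, the second term is dominated by the negative eigenvalue count of $-\Delta-W_0/\varepsilon$ on $\re^d$, which is finite by Proposition~\ref{mp1} since $W_0/\varepsilon$ is compactly supported; so this term is $O(1)$ uniformly in $E$. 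Lastly, $E/(1-\varepsilon)\leq E\leq 0$ and the monotonicity of $N(\cdot;\cdot)$ in its first argument yield \eqref{may19}.

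The main obstacle is the bookkeeping in the third step: the IMS error $-\int W_0|v|^2\,dx$ is sign-indefinite, and one needs to combine it with the positive $\varepsilon\!\int|\nabla v|^2\,dx$ surplus---left over after extracting the factor $(1-\varepsilon)$ in front of the Dirichlet form---to produce an $E$-independent remainder. The simultaneous rescaling of $V$ and $E$ by $(1-\varepsilon)^{-1}$ is what makes this possible, and the monotonicity of $N(\cdot;\cdot)$ in its first argument then absorbs the spurious shift $E\mapsto E/(1-\varepsilon)$ for free; once this algebra is in place, the rest is routine.
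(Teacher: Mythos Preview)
Your argument is correct and reaches the same conclusion, but the localization is organized differently from the paper's. The paper works with a \emph{single} cutoff $\zeta$ (vanishing near $\partial\obr$, equal to $1$ far out) and splits the \emph{metric} coefficient first, writing $1=(1-\varepsilon)+\varepsilon$: one summand carries $g=1-\varepsilon$ together with the potential $\zeta^2 V-(1-\varepsilon)\zeta\Delta\zeta$, the other carries $g=\varepsilon$ and a compactly supported potential. The pointwise bound $|\nabla u|^2-E|u|^2\geq\zeta^2(|\nabla u|^2-E|u|^2)$ followed by the identity $\int(\zeta^2|\nabla u|^2-\zeta\Delta\zeta|u|^2)\,dx=\int|\nabla(\zeta u)|^2\,dx$ then lands directly on $q_{E,{\mathcal D}}(\obr,1-\varepsilon,V)=q_{E,{\mathcal D}}(\obr,1,(1-\varepsilon)^{-1}V)$, with \emph{no} shift in $E$. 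Your IMS route instead localizes first via $\chi_1^2+\chi_2^2=1$ (producing the gradient error $W_0$) and only afterwards extracts the factor $(1-\varepsilon)$ to absorb $W_0$, at the price of the spectral shift $E\mapsto E/(1-\varepsilon)$ which you then undo by monotonicity. Both routes are standard; the paper's is slightly more economical, while yours follows the textbook IMS template. One minor technical point: your direct appeal to \eqref{may125} for the forms $A,B$ (as forms in $u$) presupposes they are closed on $L^2(\obr)$, which is not immediate since $A[u]$ controls only $\nabla(\chi_1 u)$, not $\nabla u$; the clean fix is to pass to the direct-sum form on $H^1_0(\obr)\oplus H^1(\tilde A)$ via the injective isometry $u\mapsto(\chi_1 u,\chi_2 u)$, which is essentially what your subsequent injectivity argument does anyway.
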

    \begin{proof}
    Let $\zeta \in C^\infty({\mathcal O}_R; [0,1])$ such that $\zeta(x) = 0$ if $x \in {\mathcal O}_R \cap B_{3R/2}$, and $\zeta(x) = 1$ if $x \in {\mathcal O}_{2R}$. By \eqref{may125}, we have
    $$
    \nu(q_{E, {\mathcal N}}({\mathcal O}_R, 1, V)) \leq
    $$
    \bel{may15}
    \nu(q_{E, {\mathcal N}}({\mathcal O}_R, 1-\varepsilon, \zeta^2 V - (1-\eps)\zeta \Delta \zeta)) +
    \nu(q_{E, {\mathcal N}}({\mathcal O}_R, \varepsilon, (1-\zeta^2) V + (1-\eps)\zeta \Delta \zeta)).
    \ee
    Since ${\rm supp}\,((1-\zeta^2) V + (1-\eps)\zeta \Delta \zeta) \subset \overline{B}_{2R} \setminus B_R$ is compact, we have
    \bel{may17}
    \nu(q_{E, {\mathcal N}}({\mathcal O}_R, \varepsilon, (1-\zeta^2) V + (1-\eps)\zeta \Delta \zeta)) = O(1), \quad \quad E \uparrow 0.
    \ee
    Next,
    $$
    q_{E, {\mathcal N}}[u;{\mathcal O}_R, 1-\varepsilon, \zeta^2 V - (1-\eps)\zeta \Delta \zeta] =
    $$
    $$
    \int_{\obr} \left((1-\eps)\left(|\nabla u|^2 - E|u|^2 - \zeta \Delta \zeta |u|^2\right) + \zeta^2 V |u|^2\right) dx \geq
    $$
    $$
    \int_{\obr} \left((1-\eps)\left(\zeta^2\left(|\nabla u|^2 - E|u|^2\right) - \zeta \Delta \zeta |u|^2\right) + \zeta^2 V |u|^2\right) dx =
    $$
    \bel{may16}
    q_{E, {\mathcal N}}[\zeta u;{\mathcal O}_R, 1-\varepsilon, V], \quad u \in {\rm H}^1(\obr).
    \ee
    Since $\zeta u \in {\rm H}^1_0(\obr)$ if $u \in {\rm H}^1(\obr)$, we find that \eqref{may16} implies
    $$
    \nu(q_{E, {\mathcal N}}({\mathcal O}_R, 1-\varepsilon, \zeta^2 V - (1-\eps)\zeta \Delta \zeta)) \leq
    $$
    \bel{may18}
    \nu(q_{E, {\mathcal D}}({\mathcal O}_R, 1-\varepsilon, V)) = \nu(q_{E, {\mathcal D}}({\mathcal O}_R, 1, (1-\varepsilon)^{-1} V)).
    \ee
    Now \eqref{may15}, \eqref{may17}, and \eqref{may18}, entail \eqref{may19}.
    \end{proof}
    {\em Remark}: Results of the type of Lemma \ref{l31} are well known (see e.g. \cite[Lemma 4.10]{BSo1}). We formulate Lemma \ref{l31} in a form which is both suitable and sufficient for our purposes.\\

    {\bf 4.2.} In this subsection we obtain the key estimate used in the proofs of Theorems \ref{th4} and \ref{th5} (see \eqref{may26} below). Assume that $\eta \in {\rm Adm}(\re^d)$,
    and set
    $$
    \Phi : = \varphi W,
    $$
    where $\varphi \in C_{\rm b}^2(\re^d; \re)$ is the solution of \eqref{may1} -- \eqref{may1a}. Note that if $W \in {\mathcal S}_{0,\rho}$ with $\rho > 0$, then $\Phi \in {\mathcal S}_{0,\rho}$; in particular,
    \bel{may20}
    \lim_{|x| \to \infty} \Phi(x) = 0.
    \ee
    Moreover, if $W \in {\mathcal S}_{2,\rho}$, then the functions $D^\alpha\Phi$ with $0 \leq |\alpha| \leq 2$ are bounded in $\re^d$, and decay at infinity. Further, we have $\Delta \Phi = \tilde{\eta} W + 2 \nabla \varphi \cdot \nabla W + \varphi \Delta W$, and, hence,
    $$
    \eta W = \eta_0 W + \Delta \Phi + \tilde{V},
    $$
    where
    \bel{may46}
    \tilde{V} : = - 2 \nabla \varphi \cdot \nabla W - \varphi \Delta W.
    \ee
    Therefore,
    $$
    q_E[u; \re^d, 1, \eta W] = \int_{\re^d} \left(|\nabla u|^2 - E |u|^2 + \eta W |u|^2\right) dx =
    $$
    $$
    q_E[u; \re^d, 1, \eta W] = \int_{\re^d} \left(|\nabla u|^2 - E |u|^2 + (\eta_0 W + \Delta \Phi + \tilde{V}) |u|^2\right) dx =
    $$
    \bel{may25a}
    \int_{\re^d} \left(|\nabla u - \nabla \Phi u|^2 - E |u|^2 + (\eta_0 W - |\nabla \Phi|^2 + \tilde{V}) |u|^2\right) dx, \quad u \in {\rm H}^1(\re^d).
    \ee
    If $W \in {\mathcal S}_{1,\rho}$ with $\rho > 0$, then the mapping $u \mapsto e^{\Phi} u$ is an isomorphism in ${\rm H}^1(\re^d)$, and \eqref{may25a} implies $$
    q_E[e^{\Phi}u; \re^d, 1, \eta W] = q_E[u; \re^d, e^{2\Phi}, e^{2\Phi}(\eta_0 W - |\nabla \Phi|^2 + \tilde{V})], \quad u \in {\rm H}^1(\re^d).
    $$
    Hence,
    \bel{may25}
    \nu(q_E(\re^d, 1, \eta W)) = \nu(q_E(\re^d, e^{2\Phi}, e^{2\Phi}(\eta_0 W - |\nabla \Phi|^2 + \tilde{V})), \quad E \leq 0.
    \ee
    Pick $\eps \in (0,1)$, and find $R \in (0,\infty)$ such that
    \bel{may21}
    1-\eps \leq e^{2\Phi(x)} \leq 1 + \eps, \quad |x| > R,
    \ee
    which is possible due to \eqref{may20}.\\
    Applying a standard Dirichlet-Neumann bracketing, and bearing in mind \eqref{may21}, we get
    $$
    \nu(q_{E, {\mathcal D}}(\obr , 1, (1+\eps)^{-1}e^{2\Phi}(\eta_0 W - |\nabla \Phi|^2 + \tilde{V}))) \leq
    $$
    $$
    \nu(q_E(\re^d, e^{2\Phi}, e^{2\Phi}(\eta_0 W - |\nabla \Phi|^2 + \tilde{V}))) \leq
    $$
    $$
    \nu(q_{E, {\mathcal N}}(\obr , 1, (1-\eps)^{-1}e^{2\Phi}(\eta_0 W - |\nabla \Phi|^2 + \tilde{V}))) +
    $$
    $$
    \nu(q_{E, {\mathcal N}}(B_R, e^{2\Phi}, e^{2\Phi}(\eta_0 W - |\nabla \Phi|^2 + \tilde{V}))) =
    $$
    \bel{may24}
    \nu(q_{E, {\mathcal N}}(\obr , 1, (1-\eps)^{-1}e^{2\Phi}(\eta_0 W - |\nabla \Phi|^2 + \tilde{V}))) + O(1), \quad E \uparrow 0.
    \ee
    Further, it follows from \eqref{may19} and \eqref{may45} that
    $$
    \nu(q_{E, {\mathcal D}}(\obr , 1, (1+\eps)^{-1}e^{2\Phi}(\eta_0 W - |\nabla \Phi|^2 + \tilde{V}))) + O(1) \geq
    $$
    $$
    \nu(q_{E, {\mathcal N}}(\obr , 1, (1+\eps)^{-2}e^{2\Phi}(\eta_0 W - |\nabla \Phi|^2 + \tilde{V}))) \geq
    $$
    $$
    \nu(q_{E}(\re^d , 1, (1+\eps)^{-2}e^{2\Phi}(\eta_0 W - |\nabla \Phi|^2 + \tilde{V}))) -
    $$
    $$
    \nu(q_{E, {\mathcal N}}(B_R , 1, (1+\eps)^{-2}e^{2\Phi}(\eta_0 W - |\nabla \Phi|^2 + \tilde{V}))) =
    $$
    \bel{may22}
    \nu(q_{E}(\re^d , 1, (1+\eps)^{-2}e^{2\Phi}(\eta_0 W - |\nabla \Phi|^2 + \tilde{V}))) + O(1), \quad E \uparrow 0,
    \ee
    and
    $$
    \nu(q_{E, {\mathcal N}}(\obr , 1, (1-\eps)^{-1}e^{2\Phi}(\eta_0 W - |\nabla \Phi|^2 + \tilde{V}))) \leq
    $$
    $$
    \nu(q_{E, {\mathcal D}}(\obr , 1, (1-\eps)^{-2}e^{2\Phi}(\eta_0 W - |\nabla \Phi|^2 + \tilde{V}))) + O(1) \leq
    $$
    \bel{may23}
    \nu(q_{E}(\re^d , 1, (1-\eps)^{-2}e^{2\Phi}(\eta_0 W - |\nabla \Phi|^2 + \tilde{V}))) + O(1), \quad E \uparrow 0.
    \ee
    Combining \eqref{may30} with \eqref{may24} -- \eqref{may23}, we find that for each $\eps \in (0,1)$ we have
    $$
    N(E; (1+\eps)^{-1}e^{2\Phi}(\eta_0 W - |\nabla \Phi|^2 + \tilde{V}))  + O(1) \leq
    $$
    $$
    N(E; \eta W) \leq
    $$
    \bel{may26}
    N(E; (1-\eps)^{-1}e^{2\Phi}(\eta_0 W - |\nabla \Phi|^2 + \tilde{V}))  + O(1), \quad E \uparrow 0.
    \ee

    {\bf 4.3.} In this subsection we prove Theorem \ref{th4}. Assume at first the hypotheses of its part (i). Let $\eta_0 > 0$. Making use of \eqref{may125} and  \eqref{may26}, we find that for each $\eps \in (0,1)$ we have
    $$
    N(E; (1+\eps)^{-2} \eta_0 W) - N(E; -\eps^{-1}(1+\eps)^{-1}((e^{2\Phi}-1)\eta_0 W + e^{2\Phi}(- |\nabla \Phi|^2 + \tilde{V})))  + O(1) \leq
    $$
    $$
    N(E; \eta W) \leq
    $$
    \bel{may31}
    N(E; (1-\eps)^{-2}\eta_0 W) + N(E; \eps^{-1}(1-\eps)^{-1}((e^{2\Phi}-1)\eta_0 W + e^{2\Phi}(- |\nabla \Phi|^2 + \tilde{V})))  + O(1)
    \ee
    as $E \uparrow 0$.\\
    By \eqref{j15}, we have
    \bel{may32}
    N(E;(1+\eps)\eta_0 W) =
    N_{\rm cl}(E; (1+\eps)\eta_0 W) \, (1 + o(1)) \asymp
    |E|^{d(\frac{1}{2} - \frac{1}{\rho})}, \quad E \uparrow 0,
    \ee
    provided that $\eta_0 > 0$ and $\eps \in (-1,\infty)$.
     Further, it is not difficult to show that
     \bel{may33}
     \lim_{\eps \to 0} \limsup_{E \uparrow 0} \left| \frac {N_{\rm cl}(E; (1+\eps)\eta_0 W)}{N_{\rm cl}(E; \eta_0 W)} - 1\right| = 0
    \ee
    (see e.g. \cite[Subsection 3.7]{R} for a similar argument). \\
    Finally, it is easy to see that there exists a constant $C \in (0,\infty)$ such that
    $$
    \left|((e^{2\Phi}-1)\eta_0 W + e^{2\Phi}(- |\nabla \Phi|^2 + \tilde{V}))\right| \leq C (1 + |x|)^{-\rho_*}, \quad x \in \re^d,
    $$
    where
    \bel{may35}
    \rho_* : = \min\{2\rho, \rho + 1\}.
    \ee
    Therefore, Proposition \ref{mp2} implies that
    \bel{may34}
    N(E; c((e^{2\Phi}-1)\eta_0 W + e^{2\Phi}(- |\nabla \Phi|^2 + \tilde{V}))) =
     o\left(|E|^{d(\frac{1}{2} - \frac{1}{\rho})}\right), \quad E \uparrow 0,
    \ee
    for any $c \in \re$. Putting together \eqref{may31} -- \eqref{may34}, we obtain \eqref{j14}.\\
    Assume now that $\eta_0 < 0$. Then the support of the negative part of $e^{2\Phi}\left(\eta_0 W - |\nabla \Phi|^2 + \tilde{V}\right)$ is compact. Hence, the upper bound of \eqref{may26}, implies \eqref{1}. \\
    Next, assume the hypotheses of Theorem \ref{th4} (ii); in particular, $\eta_0 = 0$. Then there exists a constant $C \in (0,\infty)$ such that
    \bel{may36}
    \left(- |\nabla \Phi|^2 + \tilde{V}\right)_- \leq C (1 + |x|)^{-\rho_*}, \quad x \in \re^d,
    \ee
    $\rho_*$ being defined in \eqref{may35}. Note that $\rho_* = 2\rho$ if $\rho \in (0,1]$, and $\rho_* = \rho + 1 > 2$ if $\rho >1$. Then the result of
    Theorem \ref{th4} (ii) follows from the upper bound in \eqref{may26}, \eqref{may36}, and Propositions \ref{mp1}, \ref{mp2}, and \ref{mp3}.\\
    Finally, assume the hypotheses of Theorem \ref{th4} (iii). Bearing in mind \eqref{may31} and \eqref{j17}, we find that for every $\eps \in (0,1)$ we have
    $$
    {\mathcal C}_d ((1-\eps)\eta_0 L) \leq
    $$
    $$
   \liminf_{E \uparrow 0} (|\ln{|E||})^{-1} N(E;\eta W) \leq  \limsup_{E \uparrow 0} (|\ln{|E||})^{-1} N(E;\eta W) \leq
   $$
     \bel{j18}
     {\mathcal C}_d ((1+\eps)\eta_0 L).
     \ee
    Due to the continuity of the function $\re \ni \eta_0 \mapsto {\mathcal C}_d(\eta_0 L)$, we conclude that \eqref{j19} follows form \eqref{j18}. Moreover, if $\lambda_1(\eta_0 L) > - \frac{(d-2)^2}{4}$, then there exists $\eps \in (0,1)$ such that $\lambda_1((1+\eps)\eta_0 L) > - \frac{(d-2)^2}{4}$. Hence, the upper bound in \eqref{may31}, and Proposition \ref{mp3} imply again \eqref{1} in this case.\\

    {\bf 4.4.} In this subsection we prove Theorem \ref{th5}. Assume at first the hypotheses of its part (i); in particular, $\eta_0 = 0$ and $\rho \in (0,1)$. Then \eqref{may26} implies that for any $\eps \in (0,1)$ we have
    $$
    N(E; -(1+\eps)^{-2} \psi W^2) - N(E; -\eps^{-1}(1+\eps)^{-1}(e^{2\Phi}(- |\nabla \Phi|^2 + \tilde{V}) + \psi W^2))  + O(1) \leq
    $$
    $$
    N(E; \eta W) \leq
    $$
    \bel{may37}
    N(E; -(1-\eps)^{-2} \psi W^2) + N(E; \eps^{-1}(1-\eps)^{-1}(e^{2\Phi}(- |\nabla \Phi|^2 + \tilde{V}) + \psi W^2))  + O(1)
    \ee
    as $E \uparrow 0$. It is easy to check that there exists a constant $C \in (0,\infty)$ such that
    \bel{may48}
    \left|e^{2\Phi}(- |\nabla \Phi|^2 + \tilde{V}) + \psi W^2\right| \leq C (1 + |x|)^{-\rho_{**}}, \quad x \in \re^d,
    \ee
    where $\rho_{**} : = \min\{3\rho, \rho + 1\}$.
    Combining \eqref{may48} with Proposition \ref{mp2}, we get
    \bel{may39a}
    N(E; -\eps^{-1}(1+\eps)^{-1}(e^{2\Phi}(- |\nabla \Phi|^2 + \tilde{V}) + \psi W^2)) = o\left(|E|^{\frac{d}{2}\left(1-\frac{1}{\rho}\right)}\right),
    \quad E \uparrow 0.
    \ee
    Now \eqref{may38} easily follows from \eqref{may37}, \eqref{may39a}, Theorem \ref{th4} (i), and an analogue of \eqref{may33} with $\eta_0 W$ replaced by
    $-\psi_0 W^2$. \\
    Finally, assume  the hypotheses of Theorem \ref{th5} (ii); in particular $\eta_0 = 0$ and $\rho = 1$. Similarly to \eqref{may37}, we find that  \eqref{may26} implies
    $$
    N(E; -(1+\eps)^{-2} \psi W^2) -
    $$
    $$
    N(E; 2\eps^{-1}(1+\eps)^{-1}(e^{2\Phi} (\varphi \Delta W + |\nabla \Phi|^2) - \psi W^2 + 2(e^{2\Phi}-1)\nabla \varphi \cdot \nabla W))  -
    $$
    $$
    N(E; 4\eps^{-1}(1+\eps)^{-1}\nabla \varphi \cdot \nabla W)) + O(1) \leq
    $$
    $$
    N(E; \eta W) \leq
    $$
    $$
    N(E; -(1-\eps)^{-2} \psi W^2) +
    $$
    $$
    N(E; -2\eps^{-1}(1-\eps)^{-1}(e^{2\Phi} (\varphi \Delta W + |\nabla \Phi|^2) - \psi W^2 + 2(e^{2\Phi}-1)\nabla \varphi \cdot \nabla W))  +
    $$
    \bel{may40}
    + N(E; - 4\eps^{-1}(1-\eps)^{-1}\nabla \varphi \cdot \nabla W)) + O(1), \quad  E \uparrow 0,
    \ee
    for any $\eps \in (0,1)$ (see \eqref{may46}). It is easy to see that there exists a constant $C \in (0,\infty)$ such that
    $$
    \left|e^{2\Phi} (\varphi \Delta W + |\nabla \Phi|^2) - \psi W^2 + 2(e^{2\Phi}-1)\nabla \varphi \cdot \nabla W\right| \leq C (1 + |x|)^{-3}, \quad x \in \re^d,
    $$
    which combined with Proposition \ref{mp1}, implies
     \bel{may41}
     N(E; c (e^{2\Phi} (\varphi \Delta W + |\nabla \Phi|^2) - \psi W^2 + 2(e^{2\Phi}-1)\nabla \varphi \cdot \nabla W)) = O(1), \quad E \uparrow 0,
     \ee
     for any $c \in \re$. Further since $\frac{\partial \varphi}{\partial x_j}$, $j =1,\ldots,d$, are admissible functions with vanishing mean values, while
     $\frac{\partial W}{\partial x_j} \in {\mathcal S}_{2,2}(\re^d)$, $j =1,\ldots,d$, it follows from Theorem \ref{th4} (ii) that
     \bel{may42}
     N(E; c \nabla \varphi \cdot \nabla W)) =  O(1), \quad  E \uparrow 0,
     \ee
     for any $c \in \re$. Thus we find that, similarly to \eqref{j18}, estimates \eqref{may40} and \eqref{j17} imply
     $$
    {\mathcal C}_d (-(1-\eps)\psi_0 {\mathcal L}) \leq
    $$
    $$
   \liminf_{E \uparrow 0} (|\ln{|E||})^{-1} N(E;\eta W) \leq  \limsup_{E \uparrow 0} (|\ln{|E||})^{-1} N(E;\eta W) \leq
   $$
     \bel{may43}
     {\mathcal C}_d (-(1+\eps)\psi_0 {\mathcal L}).
     \ee
     Due to the continuity of the function $\re \ni \psi_0 \mapsto {\mathcal C}_d(-\psi_0 {\mathcal L})$, we conclude that \eqref{may39} follows form \eqref{may43}. Moreover, if $\lambda_1(-\psi_0 {\mathcal L}) > - \frac{(d-2)^2}{4}$, then there exists $\eps \in (0,1)$ such that $\lambda_1(-(1+\eps)\psi_0 {\mathcal L}) > - \frac{(d-2)^2}{4}$. Hence, the upper bound in \eqref{may40}, estimates \eqref{may41} -- \eqref{may42}, and Proposition \ref{mp3}, imply  \eqref{1}.\\

    {\bf Acknowledgements}. The first version of this work has been done during author's visits to the University
of Hagen, Germany, in December 2014 - January 2015, and to the Isaac Newton Institute,
Cambridge, UK, in January - February 2015.  The author thanks these institutions
for financial support and hospitality.  The partial support of the Chilean Scientific
Foundation Fondecyt
under Grant 1130591, and of
N\'ucleo Milenio de F\'isica Matem\'atica
RC120002, is gratefully acknowledged as well.\\
The author thanks the anonymous referee of an earlier version of this paper for a valuable suggestion, and Dr Marcello Seri for drawing his attention to the
articles \cite{skr, sas}.\\

\bigskip

{\sc G. Raikov}\\
Facultad de Matem\'aticas\\
Pontificia Universidad Cat\'olica de Chile\\
Av. Vicu\~na Mackenna 4860\\ Santiago de Chile\\
E-mail: graikov@mat.puc.cl\\

\end{document}